\numberwithin{equation}{section}
                        \theoremstyle{plain}
\newcommand\no[1]{}
\newtheorem{theorem}{Theorem}[section]
\newtheorem{thm}{Theorem}
\newtheorem{lemma}[theorem]{Lemma}
\newtheorem{proposition}[theorem]{Proposition}
\theoremstyle{definition}
\newcommand{\lcr}{\raisebox{-5pt}{\mbox{}\hspace{1pt}
                  \epsfig{file=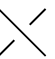}\hspace{1pt}\mbox{}}}
\def\BC{\mathbb C}
\def\BZ{\mathbb Z}
\def\BR{\mathbb R}
\def\la{\langle}
\def\ra{\rangle}
\DeclareMathOperator{\tr}{\mathrm tr}
\def\be { \begin{equation} }
\def\ee { \end{equation} }
\begin{document}
\allowdisplaybreaks
\baselineskip16pt
\title[Left orderable surgeries of double twist knots]{Left orderable surgeries of double twist knots}

\begin{abstract}
A rational number $r$ is called a left orderable slope of a knot $K \subset S^3$ if the 3-manifold obtained from $S^3$ by $r$-surgery along $K$ has left orderable fundamental group. In this paper we consider the double twist knots $C(k,l)$ in the Conway notation. For any positive integers $m$ and $n$, we show that if $K$ is a double twist knot of the form $C(2m,-2n)$,  $C(2m+1, 2n)$ or $C(2m+1, -2n)$ then there is an explicit unbounded interval $I$ such that 
any rational number $r \in I$ is a left orderable slope of $K$. 
\end{abstract}

\author{Anh T. Tran}

\thanks{2000 {\it Mathematics Subject Classification}.
Primary 57M27, Secondary 57M25.}

\thanks{{\it Key words and phrases.\/}
Dehn surgery, left orderable, L-space, Riley polynomial, double twist knot.}

\address{Department of Mathematical Sciences,
The University of Texas at Dallas,
Richardson, TX 75080-3021, USA}
\email{att140830@utdallas.edu}

\maketitle

\section{Introduction}

The motivation of this paper is the L-space conjecture of Boyer, Gordon and Watson \cite{BGW} which states that an irreducible rational homology 3-sphere is an L-space if and only if its fundamental group is not left orderable. Here a rational homology 3-sphere $Y$ is an L-space if its Heegaard Floer homology $\widehat{\mathrm{HF}}(Y)$ has rank equal to the order of $H_1(Y; \BZ)$, and a non-trivial group $G$ is left orderable if it admits a total ordering $<$ such that $g<h$ implies $fg < fh$ for all elements $f,g,h$ in $G$.  A knot $K$ in $S^3$ is called an L-space knot if it admits a positive Dehn surgery yielding an L-space. It is known that non-torus alternating knots are not L-space knots, see \cite{OS}. In view of the L-space conjecture, this would imply that any non-trivial Dehn surgery along a non-torus alternating knot produces a 3-manifold with left orderable fundamental group. 

A rational number $r$ is called a left orderable slope of a knot $K \subset S^3$ if the 3-manifold obtained from $S^3$ by $r$-surgery along $K$ has left orderable fundamental group.  As mentioned above, one would expect that any rational number is a left orderable slope of any non-torus alternating knot.  It is known that any rational number $r \in (-4,4)$ is a left orderable slope of the figure eight knot, and any rational number $r \in [0,4]$ is a left orderable slope of  the hyperbolic twist knot $5_2$, see \cite{BGW} and \cite{HT-52} respectively. Consider the double twist knot $C(k,l)$ in the Conway notation as in Figure 1, where $k, l$ denote the numbers of horizontal half-twists with sign in the boxes. Here the sign of $\lcr$ is positive in the box $k$ and  is negative in the box $l$. Then the following results were shown in  \cite{HT-genus1, Tr} by using continuous families of hyperbolic $\mathrm{SL}_2(\BR)$-representations of knot groups. If $m,n$ are integers $\ge 1$, any rational number $r \in (-4n, 4m)$ is a left orderable slope of $C(2m, 2n)$. If $m, n$ are integers $\ge 2$ then any rational number $r \in [0, \max\{4m, 4n\})$ is a left orderable slope of $C(2m, - 2n)$ and any rational number $r \in [0, 4]$ is a left orderable slope of both $C(2m, - 2)$  and $C(2, -2n)$. Note that $C(2,2)$ is the figure eight knot and $C(4,-2)$ is the twist knot $5_2$. Moreover $C(2,-2)$ is the trefoil knot, which is the $(2,3)$-torus knot. 

\begin{figure}[th]
\centerline{\psfig{file=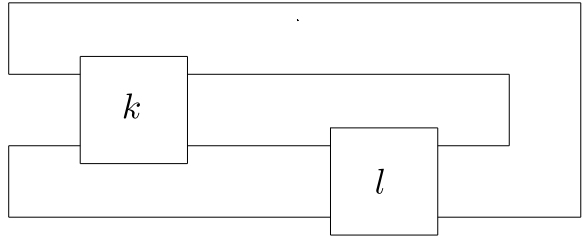,width=3in}}
\caption{The double twist knot/link $C(k,l)$ in the Conway notation.}
\end{figure} 

In this paper, by using continuous families of elliptic $\mathrm{SL}_2(\BR)$-representations of knot groups we extend the range of left orderable slopes of $C(2m, -2n)$. Moreover, we also give left orderable slopes of $C(2m+1, \pm 2n)$. 

 \begin{thm} \label{main}
 Suppose $K$ is a double twist knot of the form $C(2m,-2n)$,  $C(2m+1, 2n)$ or $C(2m+1, -2n)$ in the Conway notation for some positive integers $m$ and $n$. Let 
$$
\mathrm{LO}_K = \begin{cases} (-\infty,1) &\mbox{if } K = C(2m,-2n), \\ 
(-\infty, 2n-1) & \mbox{if } K = C(2m+1,2n),  \\ 
(3-2n, \infty) & \mbox{if } K = C(2m+1,-2n) \text{ and } n \ge 2. \end{cases}
$$
Then any rational number $r \in \mathrm{LO}_K$ is a left orderable slope of  $K$. 
 \end{thm}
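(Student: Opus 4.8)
The plan is to realize each slope $r \in \mathrm{LO}_K$ by a representation of $\pi_1(S^3 \setminus K)$ into the universal cover $\widetilde{\mathrm{SL}_2(\BR)}$ whose image is nontrivial and which sends the surgery relator $\mu^p \lambda^q$ to the identity, where $r = p/q$, $\mu$ is the meridian and $\lambda$ the canonical longitude. Since $\widetilde{\mathrm{SL}_2(\BR)}$ embeds in $\mathrm{Homeo}_+(\BR)$, such a representation descends to a nontrivial action of $\pi_1(M_r)$ on $\BR$ by orientation-preserving homeomorphisms, and by the criterion of Boyer--Rolfsen--Wiest the fundamental group of the irreducible rational homology sphere $M_r$ is then left orderable. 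One checks separately that $M_r$ is an irreducible rational homology sphere for the slopes in question: these are hyperbolic two-bridge knots, so all but finitely many slopes are fine and the few exceptional ones can be excluded or treated by hand.

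First I would set up the two-bridge presentation $\pi_1(S^3 \setminus K) = \langle a, b \mid wa = bw \rangle$ with $a, b$ meridians, and parametrize the nonabelian $\mathrm{SL}_2(\BC)$-representations in Riley's normal form, where the diagonal entry $s$ of $\rho(a)$ and an off-diagonal parameter $u$ satisfy the Riley polynomial $\phi_K(s,u) = 0$. To obtain \emph{elliptic} $\mathrm{SL}_2(\BR)$-representations I impose $\mathrm{tr}\,\rho(a) = s + s^{-1} \in (-2,2)$, i.e. $s = e^{\mathbf{i}\theta}$ on the unit circle, together with reality of $\rho$ after conjugation; this cuts out a real one-parameter arc $\rho_\theta$ of irreducible representations with elliptic meridian, and the first technical task is to show this arc is nonempty and connected over the relevant range of $\theta$ by analyzing the zero set of $\phi_K$. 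For the double twist knots the word $w$ is built from the two twist regions, so $\phi_K$ can be expressed through Chebyshev-type polynomials in the meridian trace and in the partial traces of the two twist blocks, which makes this analysis tractable for all three families.

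The heart of the argument is the rotation-number bookkeeping on $\widetilde{\mathrm{SL}_2(\BR)}$. On the peripheral torus $\mu$ and $\lambda$ commute, so for an irreducible $\rho_\theta$ with elliptic meridian their images lie in a common conjugate of $\mathrm{SO}(2)$; lifting to $\widetilde{\mathrm{SL}_2(\BR)}$ places $\tilde\rho_\theta(\mu)$ and $\tilde\rho_\theta(\lambda)$ in the copy of $\BR$ covering that circle, where the translation number is additive. Writing $m(\theta)$ and $\ell(\theta)$ for the translation numbers of meridian and longitude, the relator $\mu^p \lambda^q$ lifts to the identity exactly when $p\,m(\theta) + q\,\ell(\theta) = 0$, i.e. when $r = p/q = -\ell(\theta)/m(\theta)$. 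Thus I must compute the ratio $\ell(\theta)/m(\theta)$ along the arc, show it varies continuously, and determine its range. The meridian contribution $m(\theta)$ is read off directly from $\theta$, while $\ell(\theta)$ requires evaluating the lift of the long longitude word, and it is here that the specific endpoints $1$, $2n-1$ and $3-2n$ will emerge.

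The main obstacle is the computation and control of $\ell(\theta)$ and the limiting behavior of $-\ell(\theta)/m(\theta)$ at the ends of the elliptic arc. The unboundedness of $\mathrm{LO}_K$ is precisely the statement that as $\theta$ approaches an endpoint of the elliptic region one of the translation numbers tends to $0$ while the other stays bounded away from $0$, so the realizable slope $-\ell(\theta)/m(\theta)$ runs off to $\pm\infty$; identifying which endpoint gives $+\infty$ versus $-\infty$, and which finite value ($1$, $2n-1$, or $3-2n$) occurs at the other end, rests on a careful sign and monotonicity analysis of these translation-number functions for each of the three knot families, with the hypothesis $n \ge 2$ in the last case entering to guarantee that the relevant elliptic arc exists. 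Once $-\ell(\theta)/m(\theta)$ is shown to be continuous and to sweep out all of $\mathrm{LO}_K$, the intermediate value theorem produces, for every rational $r \in \mathrm{LO}_K$, a representation realizing $r$, which completes the proof.
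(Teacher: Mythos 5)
Your proposal follows essentially the same route as the paper: a one-parameter arc of elliptic $\mathrm{SL}_2(\BR)$-representations cut out by the Chebyshev form of the Riley polynomial, realization of the slope $r$ as $-\varphi(\theta)/\theta$ (your $-\ell(\theta)/m(\theta)$) via continuity and the intermediate value theorem with the unbounded end coming from the meridian rotation number vanishing, then lifting to $\widetilde{\mathrm{SL}_2(\BR)}$ and invoking Boyer--Rolfsen--Wiest on the irreducible surgered manifold. The only cosmetic difference is that the paper needs irreducibility (from Hatcher--Thurston) rather than the rational-homology-sphere condition, and treats $r=0$ separately via the first Betti number.
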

 
Combining this with results in \cite{HT-genus1, Tr}, we conclude that if $m$ and $n$ are integers $\ge 2$ then any rational number $r \in (-\infty, \max\{4m, 4n\})$ is a left orderable slope of  $C(2m, - 2n)$ and any rational number $r \in (-\infty, 4]$ is a left orderable slope of both $C(2m, - 2)$  and $C(2, -2n)$. In the subsequent paper \cite{KTT} we will use continuous families of hyperbolic $\mathrm{SL}_2(\BR)$-representations of knot groups to extend the range of left orderable slopes of $C(2m+1, - 2n)$. More specifically, we will show that any rational number $r \in (-4n, 4m)$ is a left orderable slope of $C(2m+1,-2n)$ detected by hyperbolic $\mathrm{SL}_2(\BR)$-representations of the knot group.

We remark that in the case of $C(2m+1, \pm 2n)$, where $m$ and $n$ are positive integers, Gao \cite{Ga} independently obtains similar results. She proves  a weaker result that any rational number $r \in (-\infty, 1)$ is a left orderable slope of  $C(2m+1, 2n)$ and a stronger result that any rational number $r \in (-4n, \infty)$ is a left orderable slope of  $C(2m+1, -2n)$. 

As in \cite{BGW, HT-genus1, HT-52, Tr,  CD} the proof of Theorem \ref{main} is based on the existence of continuous families of elliptic $\mathrm{SL}_2(\BR)$-representations of the knot groups of double twist knots $C(2m,-2n)$ and $C(2m+1, \pm 2n)$ into $\mathrm{SL}_2(\BR)$ and the fact that $\widetilde{\mathrm{SL}_2(\BR)}$, which is the universal covering group of $\mathrm{SL}_2(\BR)$, is a left orderable group. 

This paper is organized as follows. In Section 1, we study certain real roots of the Riley polynomial of double twist knots $C(k,-2p)$, whose zero locus describes all non-abelian representations of the knot group into $\mathrm{SL}_2(\BC)$. In Section 2, we prove  Theorem \ref{main}.

 \section{Real roots of the Riley polynomial}
 
For a knot $K$ in $S^3$, let $G(K)$ denote the knot group of $K$ which is the fundamental group of the complement of an open tubular neighborhood of $K$. 

Consider the double twist knot/link $C(k, l)$ in the Conway notation as in Figure 1, where $k, l$ are integers such that $|kl| \ge 3$. Note that $C(k, l)$ is the rational knot/link corresponding to continued fraction $k+1/l$. It is easy to see that $C(k,l)$ is the mirror image of $C(l,k)=C(-k,-l)$. Moreover, $C(k, l)$ is a knot if $kl$ is even and is a two-component link if $kl$ is odd. In this paper, we only consider knots and so we can assume that $k>0$ and $l=-2p$ is even. 

Note that $C(k, -2p)$ is the mirror image of the double twist knot $J(k,2p)$ in \cite{HS}.  Then, by \cite{HS}, the knot group of $C(k, -2p)$ has a presentation 
$$G(C(k, -2p)) = \la a, b \mid a w^{p}  =  w^{p} b\ra$$ 
where $a, b$ are meridians and $$w = \begin{cases} (ab^{-1})^m(a^{-1}b)^m &\mbox{if } k=2m, \\ 
(ab^{-1})^m ab (a^{-1}b)^m & \mbox{if } k=2m+1. \end{cases}$$
Moreover, the canonical longitude of $C(k, -2p)$ corresponding to the meridian $\mu=a$ is $\lambda = (w^{p} (w^{p})^*a^{-2\varepsilon})^{-1}$, where $\varepsilon =0$ if $k=2m$ and $\varepsilon =2p$ if $k=2m+1$. Here, for a word $u$ in the letters $a,b$ we let $u^*$ be the word obtained by reading $v$ backwards. 

Suppose $\rho: G(C(k, -2p)) \to \mathrm{SL}_2(\BC)$ is a nonabelian representation. 
Up to conjugation, we may assume that 
\begin{equation} \label{repn}
\rho(a) = \left[ \begin{array}{cc}
M& 1 \\
0 & M^{-1} \end{array} \right] \quad \text{and} \quad 
\rho(b) = \left[ \begin{array}{cc}
M & 0 \\
2 - y & M^{-1} \end{array} \right]
\end{equation}
where $(M,y) \in \BC^2$ satisfies the matrix equation $\rho(a w^p) = \rho(w^p b)$. It is known that this matrix equation is equivalent to a single polynomial equation $R_{C(k, -2p)}(x,y) =0$, where $x= (\tr \rho(a))^2$ and $R_K(x,y)$ is the Riley polynomial of $K$, see \cite{Ri}. This polynomial can be described via the Chebychev polynomials as follows. 

Let $\{S_j(v)\}_{j \in \BZ}$ be the Chebychev polynomials in the variable $v$ defined by $S_0(v) = 1$, $S_1(v) = v$ and $S_j(v) = v S_{j-1}(v) - S_{j-2}(v)$ for all integers $j$. Note that $S_{j}(v) = - S_{-j-2}(v)$ and $S_j(\pm2) = (\pm 1)^j (j+1)$. Moreover, we have $S_j(v) = (s^{j+1} - s^{-(j+1)})/(s - s^{-1})$ for $v = s + s^{-1} \not= \pm 2$. Using this identity one can prove the following.

\begin{lemma} \label{chev}
For any integer $j$ and any positive integer $n$ we have 
\begin{enumerate}
\item   $S^2_j(v) - v S_j(v)S_{j-1}(v) + S^2_{j-1}(v) =1$.
\item $S_n(v)-S_{n-1}(v)=\prod_{j=1}^n \big( v-2\cos \frac{(2j-1)\pi}{2n+1} \big)$. 
\item $S_n(v) + S_{n-1}(v)=\prod_{j=1}^n \big( v-2\cos \frac{2j\pi}{2n+1} \big)$.
\item $S_n(v) =\prod_{j=1}^n \big( v-2\cos \frac{j\pi}{n+1} \big)$.
\end{enumerate}
\end{lemma}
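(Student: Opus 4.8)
The plan is to handle part (1) by a direct induction and parts (2)--(4) by a single trigonometric substitution. For (1), set $P_j(v) := S_j^2(v) - v S_j(v)S_{j-1}(v) + S_{j-1}^2(v)$. Using the recurrence $S_{j+1} = vS_j - S_{j-1}$, which holds for every integer $j$, I would substitute for $S_{j+1}$ in $P_{j+1}$ and expand; the leading $v^2 S_j^2$ contributions cancel against the cross term, leaving exactly $P_{j+1} = P_j$. Hence $P_j$ is independent of $j$, and since $P_1 = v^2 - v\cdot v\cdot 1 + 1 = 1$, we get $P_j \equiv 1$ for all integers $j$. This avoids any case analysis and settles (1) uniformly over $\BZ$, negative indices included.

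For (2)--(4) I would pass to the trigonometric form of $S_n$. Writing $v = 2\cos\theta$, so that $s = e^{i\theta}$ in the identity $S_j(v) = (s^{j+1}-s^{-(j+1)})/(s-s^{-1})$ recorded just before the lemma, one finds $S_n(2\cos\theta) = \sin((n+1)\theta)/\sin\theta$. Applying the sum-to-difference identities to $S_n \mp S_{n-1}$ and cancelling $\sin\theta = 2\sin\frac{\theta}{2}\cos\frac{\theta}{2}$ yields the compact forms
$$
S_n - S_{n-1} = \frac{\cos\frac{(2n+1)\theta}{2}}{\cos\frac{\theta}{2}}, \qquad S_n + S_{n-1} = \frac{\sin\frac{(2n+1)\theta}{2}}{\sin\frac{\theta}{2}}.
$$
Reading off the zeros of the numerators (with denominators nonzero) locates the roots: $\cos\frac{(2n+1)\theta}{2}=0$ forces $\theta = (2j-1)\pi/(2n+1)$, giving (2); $\sin\frac{(2n+1)\theta}{2}=0$ forces $\theta = 2j\pi/(2n+1)$, giving (3); and $\sin((n+1)\theta)=0$ in $S_n$ itself forces $\theta = j\pi/(n+1)$, giving (4). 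In each case $j$ runs over $1,\dots,n$, and $v = 2\cos\theta$ produces the asserted linear factors.

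The remaining bookkeeping, which I expect to be the only delicate point, is to upgrade these root computations to honest polynomial identities. Each right-hand side is monic of degree $n$ in $v$, and each left-hand side is likewise monic of degree $n$, since $\deg S_n = n$ with leading coefficient $1$ while $\deg S_{n-1} = n-1$. I would then check that the $n$ angles in each part lie in the open interval $(0,\pi)$, on which $\cos$ is injective, so the $n$ values $2\cos\theta$ are pairwise distinct; a monic degree-$n$ polynomial with $n$ prescribed distinct roots is exactly the product of the corresponding linear factors. Finally, because the trigonometric expressions are valid only for $v \ne \pm 2$, I would observe that the two sides of each identity are polynomials agreeing at infinitely many values of $v$, hence agree identically, which covers the excluded points $v = \pm 2$ as well.
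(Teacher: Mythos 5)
Your proof is correct and follows essentially the route the paper intends: the paper omits the proof, remarking only that the lemma follows from the closed form $S_j(v)=(s^{j+1}-s^{-(j+1)})/(s-s^{-1})$, and your trigonometric substitution $v=2\cos\theta$, $s=e^{i\theta}$ for parts (2)--(4), together with the monic-degree-$n$ bookkeeping, is exactly that argument carried out. Your telescoping induction for part (1) is a minor (and equally valid) variant of deriving the same identity from the closed form.
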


The Riley polynomial of $C(k, -2p)$, whose zero locus describes all non-abelian representations of the knot group of $C(k, -2p)$ into $\mathrm{SL}_2(\BC)$, is
$$R_{C(k, -2p)}(x,y)=S_{p}(t) - z S_{p-1}(t)$$
where 
\begin{eqnarray*}
t &=& \tr \rho(w) = \begin{cases} 2+ (y+2-x)(y-2) S^2_{m-1}(y) &\mbox{if } k=2m, \\ 
2- (y+2-x)(S_m(y)-S_{m-1}(y))^2 & \mbox{if } k=2m+1, \end{cases}
\end{eqnarray*}
and
\begin{eqnarray*}
z &=& \begin{cases} 1 + (y+2-x) S_{m-1}(y) (S_{m}(y) - S_{m-1}(y)) &\mbox{if } k=2m, \\ 
1 - (y+2-x) S_{m}(y) (S_{m}(y) - S_{m-1}(y))  & \mbox{if } k=2m+1. \end{cases}
\end{eqnarray*}
Moreover, for the representation $\rho: G(C(k, -2p)) \to \mathrm{SL}_2(\BC)$ of the form \eqref{repn} the image of the canonical longitude $\lambda =  (w^{p} (w^{p})^*a^{-2\varepsilon})^{-1}$ has the form 
$\rho(\lambda) = \left[ \begin{array}{cc}
L & * \\
0 & L^{-1} \end{array} \right]$, where 
$$L = - \frac{M^{-1}(S_m(y) - S_{m-1}(y)) - M(S_{m-1}(y) - S_{m-2}(y))} {M(S_m(y) - S_{m-1}(y)) - M^{-1} (S_{m-1}(y) - S_{m-2}(y))} \quad  \text{if~}  k=2m$$ and  
$$L = - M^{4p} \frac{M^{-1}S_m(y) - M S_{m-1}(y)}{M S_m(y) - M^{-1} S_{m-1}(y)} \quad \text{if~}  k=2m+1.$$
See e.g. \cite{Tr, Pe}. 

Lemmas \eqref{solution}--\eqref{solution3} below describe continuous families of real roots of the Riley polynomials of the double twist knots $C(2m, -2n)$, $C(2m+1, 2n)$ and $C(2m+1, -2n)$ respectively, where $m$ and $n$ are positive integers.

\begin{lemma} \label{solution} There exists a continuous real function $y: [4-1/(mn), 4] \to [2, \infty)$ in the variable $x$ such that 
\begin{itemize}
\item $y(4-1/(mn)) =2$ and 
\item $R_{C(2m,-2n)}(x, y(x))=0$ for all $x \in [4-1/(mn), 4]$.
\end{itemize}
\end{lemma}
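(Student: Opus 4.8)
The plan is to regard $R := R_{C(2m,-2n)}(x,y) = S_n(t) - z\,S_{n-1}(t)$, with $t$ and $z$ the expressions displayed above for $k=2m$, $p=n$, as a function on the region $\{x\le 4,\ y\ge 2\}$, and to follow the branch of its zero set issuing from $(x_0,2)$, where $x_0 := 4-1/(mn)$. Set $\beta := y+2-x$, so that $\beta\ge y-2\ge 0$ on this region; by Lemma \ref{chev}(4) one has $S_{m-1}(y)>0$, and by Lemma \ref{chev}(2) one has $S_{m-1}(y)-S_{m-2}(y)>0$, for all $y\ge 2$. Hence $t = 2+\beta(y-2)S_{m-1}^2(y)\ge 2$, so $S_{n-1}(t)>0$ and the sign of $R$ agrees with that of $S_n(t)/S_{n-1}(t)-z$. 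First I would fix the base point: since $S_j(2)=j+1$, at $y=2$ one gets $t=2$ and $z=1+m(4-x)$, so $R(x,2)=1-mn(4-x)$; thus $R(x_0,2)=0$ while $R(x,2)>0$ for every $x\in(x_0,4]$.

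For fixed $x\in(x_0,4]$ I would produce a root $y>2$ by the intermediate value theorem, using the identity
\[
t-z = 1-(y+2-x)\,S_{m-1}(y)\bigl(S_{m-1}(y)-S_{m-2}(y)\bigr),
\]
which follows from $(y-2)S_{m-1}-(S_m-S_{m-1}) = (y-1)S_{m-1}-S_m = -(S_{m-1}-S_{m-2})$ and the recurrence $S_m=yS_{m-1}-S_{m-2}$. The subtracted term is positive and of degree $2m-1$ in $y$, so $t-z\to-\infty$; since $S_n(t)/S_{n-1}(t) = t-S_{n-2}(t)/S_{n-1}(t)\le t$, also $S_n(t)/S_{n-1}(t)-z\to-\infty$, whence $R(x,y)<0$ for $y$ large. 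With $R(x,2)>0$ this gives a zero $y(x)\in(2,\infty)$, and the estimate is uniform for $x\le 4$, so these roots stay bounded on compact $x$-intervals.

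Turning existence into a continuous function is the substantive step. At the base point the implicit function theorem applies: using $S_j'(2)=\tfrac{1}{6}j(j+1)(j+2)$ I compute $\partial R/\partial x\,(x_0,2)=mn>0$ and $\partial R/\partial y\,(x_0,2)=\tfrac{(m+n)(1-4mn)}{6n}<0$, so near $x_0$ the zero set is an increasing graph $y(x)$ with $y(x_0)=2$. To control the whole interval I would write $R = S_{n-1}(t)\,g$ with $g := S_n(t)/S_{n-1}(t)-z$; since $S_{n-1}(t)>0$, on the zero set $\partial R/\partial y$ has the sign of $g'(y)$, and a direct computation gives there
\[
\frac{\partial R}{\partial y} = \frac{t_y\,W_n(t)}{S_{n-1}(t)} - z_y\,S_{n-1}(t),\qquad W_n := S_n'S_{n-1}-S_nS_{n-1}'>0,
\]
with $t_y,z_y>0$ the $y$-derivatives of $t,z$ at fixed $x$.

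The main obstacle is the inequality $z_y\,S_{n-1}^2(t) > t_y\,W_n(t)$ on the zero set, i.e.\ $\partial R/\partial y<0$ there; I verified it at the base point, where it produces the clean value above. Granting it, $g$ decreases strictly through each of its zeros, so for every $x\in(x_0,4]$ the root $y(x)$ is unique; uniqueness together with the uniform bound from the previous paragraph yields continuity of $x\mapsto y(x)$ by the standard limiting argument, and by construction $y(x_0)=2$, $y(x)\in[2,\infty)$, and $R(x,y(x))=0$. Proving the inequality uniformly in $m$, $n$ and $x$ — expanding $t_y$ and $z_y$ through the recurrence and bounding them against the Chebyshev Wronskian $W_n$ by means of Lemma \ref{chev} — is where the real work lies.
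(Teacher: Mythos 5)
There is a genuine gap, and you have correctly located it yourself: everything in your argument up to the intermediate value theorem is fine (the computation $R(x,2)=1-mn(4-x)$, the identity for $t-z$, and the conclusion that $R(x,y)\to-\infty$ as $y\to\infty$ all check out), but the passage from ``a root exists for each $x$'' to ``the roots form a continuous function $y(x)$'' rests entirely on the uniqueness of the root in $[2,\infty)$, which you reduce to the unproven inequality $z_y\,S_{n-1}^2(t) > t_y\,W_n(t)$ on the zero set. As stated, that inequality is the whole content of the lemma beyond the IVT, and attacking it head-on by expanding $t_y$, $z_y$ and the Wronskian $W_n$ is genuinely hard; your proposal does not supply the idea that makes it tractable.

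The paper sidesteps this entirely with one algebraic move that your plan is missing. On the region $x\le 4$, $y\ge 2$ one has $t\ge 2$ and $z\ge 1$, so the ``conjugate'' factor $S_{n-2}(t)-zS_{n-1}(t)\le S_{n-2}(t)-S_{n-1}(t)<0$ never vanishes; hence $R=0$ is equivalent to the vanishing of the product
\begin{equation*}
P(x,y)=\bigl(S_n(t)-zS_{n-1}(t)\bigr)\bigl(S_{n-2}(t)-zS_{n-1}(t)\bigr)=(z^2-tz+1)\,S_{n-1}^2(t)-1,
\end{equation*}
where the second equality uses $S_nS_{n-2}=S_{n-1}^2-1$ and $S_n+S_{n-2}=tS_{n-1}$. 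A further identity, $z^2-tz+1=(y+2-x)S_{m-1}^2(y)(t+2-x)$, turns this into
\begin{equation*}
P(x,y)=(y+2-x)\,S_{m-1}^2(y)\,(t+2-x)\,S_{n-1}^2(t)-1,
\end{equation*}
which for fixed $x\in[4-1/(mn),4]$ is \emph{manifestly} a strictly increasing function of $y\ge 2$ (each factor is positive and nondecreasing, using that $S_l$ is strictly increasing on $[2,\infty)$ by Lemma~\ref{chev}(4)), with $P(x,2)=(4-x)^2m^2n^2-1\le 0$ and $P\to\infty$. Strict monotonicity of $P$ gives the uniqueness you need for free, and it is in fact equivalent, at the zeros, to the derivative inequality you were trying to prove. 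So your skeleton is correct, but without the factorization through $P$ the decisive step is missing; I recommend replacing the Wronskian estimate by this conjugation trick.
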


\begin{proof}
Let $K=C(2m,-2n)$. We have $R_K(x,y)=S_{n}(t) - z S_{n-1}(t)$
where 
\begin{eqnarray*}
t &=& 2+ (y+2-x)(y-2) S^2_{m-1}(y), \\
z &=& 1 + (y+2-x) S_{m-1}(y) (S_{m}(y) - S_{m-1}(y)).
\end{eqnarray*}
Consider real numbers $x \in [4-1/(mn), 4]$ and $y \in [2, \infty)$. Since $y \ge 2 \ge x-2$, we have $t \ge 2$ and $z \ge 1$. This implies that $zS_{n-1}(t) - S_{n-2}(t) \ge S_{n-1}(t) - S_{n-2}(t)>0$, by Lemma \ref{chev}. The equation $R_K(x,y)=0$ is then equivalent to 
\begin{equation} \label{quadratic}
\big( S_{n}(t) - z S_{n-1}(t)  \big) \big( S_{n-2}(t) - zS_{n-1}(t)\big)=0.
\end{equation}
 
Let $P(x,y)$ denote the left hand side of equation \eqref{quadratic}. By Lemma \ref{chev}, we have $S^2_n(t) - t S_n(t)S_{n-1}(t) + S^2_{n-1}(t)=1$. This can be written as $S_n(t) S_{n-2}(t) = S^2_{n-1}(t) -1$. From this and $S_n(t) + S_{n-2}(t) = t S_{n-1}(t)$ we get
$$
P(x,y) = (z^2 - t z +1) S^2_{n-1}(t) -1.
$$
By a direct calculation, using $S^2_m(y)  + S^2_{m-1}(y) - y S_m(y)S_{m-1}(y) = 1$, we have 
\begin{eqnarray*}
&&z^2 - tz + 1 \\
&=& (z-1)^2 - (t-2) z \\
                   &=& (y+2-x)^2 S^2_{m-1}(y) (S_{m}(y) - S_{m-1}(y))^2 \\
                   && - \, (y+2-x)(y-2) S^2_{m-1}(y) \big[ 1 + (y+2-x) S_{m-1}(y) (S_{m}(y) - S_{m-1}(y)) \big]\\                 
                   &=& (y+2-x) S^2_{m-1}(y)  \big[ 4-x +  (y+2-x) (y-2) S^2_{m-1}(y) \big] \\
                   &=& (y+2-x) S^2_{m-1}(y) (t+2-x).\end{eqnarray*}
Hence $P(x,y) = (y+2-x) S^2_{m-1}(y)  (t+2-x)S^2_{n-1}(t)-1$. 

By Lemma \ref{chev}(4), for any positive integer $l$ the Chebychev polynomial $S_l(v) =\prod_{j=1}^l (v-2\cos \frac{j\pi}{l+1})$ is a strictly increasing function in $v\in [2,\infty)$. This implies that, for a fixed real number $x \in [4-1/(mn), 4]$, the polynomials $t = 2+ (y+2-x)(y-2) S^2_{m-1}(y) \ge 2$ and $P(x,y) = (y+2-x) S^2_{m-1}(y)  (t+2-x)S^2_{n-1}(t)-1$ are strictly increasing functions in $y \in [2,\infty)$. Note that $\lim_{y \to \infty} P(x,y) = \infty$ and 
$$\lim_{y \to 2^+} P(x,y) = P(x,2) = (4-x)^2 m^2n^2 -1 \le 0.$$
Hence  there exists a unique real number $y(x) \in [2, \infty)$ such that $P(x,y(x))=0$. Since $P(4-\frac{1}{mn}, 2)=0$ we have 
$y(4-\frac{1}{mn}) =2$. Finally, by the implicit function theorem $y = y(x)$ is a continuous function in  $x \in[4-1/(mn), 4]$.
 \end{proof}

\begin{lemma} \label{solution2}
There exists a continuous real function $x: [2, \infty) \to (4 \cos^2 \frac{(2n-1)\pi}{4n+2}, \infty)$ in the variable $y$ such that 
\begin{itemize}
\item $x(2) < 4 \cos^2 \frac{(2n-2)\pi}{4n+2}$, 
\item $\lim_{y \to \infty} x(y) = \infty$ and 
\item $R_{C(2m+1,2n)}(x(y),y)=0$ for all $y \in [2, \infty)$.
\end{itemize}
\end{lemma}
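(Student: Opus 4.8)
The plan is to solve $R_{C(2m+1,2n)}(x,y)=0$ for $x$ as a function of $y$ by first eliminating $x$ in favour of $t=\tr\rho(w)$. Since $-2p=2n$ forces $p=-n$, the identities $S_{-n}=-S_{n-2}$ and $S_{-n-1}=-S_{n-1}$ give $R_{C(2m+1,2n)}(x,y)=zS_{n-1}(t)-S_{n-2}(t)$. For $k=2m+1$ one has $t-2=-(y+2-x)(S_m(y)-S_{m-1}(y))^2$ and $z=1-(y+2-x)S_m(y)(S_m(y)-S_{m-1}(y))$; eliminating the common factor $y+2-x$ yields $z=1+(t-2)\kappa$ with $\kappa:=S_m(y)/(S_m(y)-S_{m-1}(y))$, which satisfies $\kappa>1$ for all $y\ge 2$ because $S_{m-1}(y)>0$ there. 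Substituting this into $R=0$ gives the clean factorisation $R=(t-2)S_{n-1}(t)\,[\kappa-\Phi(t)]$, where $\Phi(t):=\dfrac{S_{n-2}(t)-S_{n-1}(t)}{(t-2)S_{n-1}(t)}$. Thus, for fixed $y$, solving $R=0$ amounts to solving $\Phi(t)=\kappa(y)$ for $t$ (away from the zeros of $S_{n-1}$ and from $t=2$) and then recovering $x=y+2-(2-t)/(S_m(y)-S_{m-1}(y))^2$.

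Next I would pin down the correct branch. A short computation using $S_n=tS_{n-1}-S_{n-2}$ shows $(t-2)S_{n-1}(t)(\Phi(t)-1)=-(S_n(t)-S_{n-1}(t))$, so $\Phi=1$ exactly at the zeros $2\cos\frac{(2j-1)\pi}{2n+1}$ of $S_n-S_{n-1}$ from Lemma \ref{chev}(2). I would work on the interval $I=(t_0,r)$ bounded below by the smallest such zero $t_0=2\cos\frac{(2n-1)\pi}{2n+1}$ and above by the smallest zero $r=2\cos\frac{(n-1)\pi}{n}$ of $S_{n-1}$; one checks $t_0<r$ and that $S_{n-1}$ has no zero inside $I$ (so it keeps the constant sign $(-1)^{n-1}$ there), whence $\Phi$ is continuous on $I$ with $\Phi(t_0)=1$. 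A sign count at $r^-$, using $S_{n-1}(r)=0$ and $S_{n-2}(r)=(-1)^n$, gives $\Phi(t)\to+\infty$ as $t\to r^-$. Hence $\Phi(I)\supseteq(1,\infty)$, and for every $y\ge2$ (so every $\kappa(y)\in(1,m+1]$) there is $t(y)\in I$ with $\Phi(t(y))=\kappa(y)$.

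The main obstacle is to show this $t(y)$ is unique and depends continuously on $y$, i.e. that $\Phi$ is strictly monotone on $I$. Here I would substitute $t=2\cos\theta$ and use $S_j(2\cos\theta)=\sin((j+1)\theta)/\sin\theta$ to rewrite $\Phi=\bigl(1-g(\theta)\bigr)^{-1}$ with $g(\theta)=\cos\frac{(2n+1)\theta}{2}/\cos\frac{(2n-1)\theta}{2}$; a product-to-sum computation then gives $g'(\theta)=-\bigl(\sin 2n\theta+2n\sin\theta\bigr)/\bigl(2\cos^2\tfrac{(2n-1)\theta}{2}\bigr)$. On the $\theta$-range corresponding to $I$, namely $\theta\in(\frac{(n-1)\pi}{n},\frac{(2n-1)\pi}{2n+1})$, one has $\sin\theta\ge\sin\frac{2\pi}{2n+1}\ge\frac{4}{2n+1}$ by Jordan's inequality, whence $2n\sin\theta\ge\frac{8n}{2n+1}>1\ge-\sin 2n\theta$ for $n\ge2$; therefore $g'<0$, $\Phi$ is strictly increasing in $t$ on $I$, and $t(y)=\Phi^{-1}(\kappa(y))$ is a well-defined continuous function. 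Consequently $x(y)$ is continuous.

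Finally I would read off the three asserted properties. Since $S_m(y)-S_{m-1}(y)\ge1$ for $y\ge2$ and $2-t>0$ on $I$, we get $x(y)\ge y+2-(2-t(y))=y+t(y)>2+t_0=4\cos^2\frac{(2n-1)\pi}{4n+2}$, the required lower bound. At $y=2$ we have $S_m(2)-S_{m-1}(2)=1$ and $\kappa(2)=m+1$, so $x(2)=2+t(2)<2+r\le 4\cos^2\frac{(2n-2)\pi}{4n+2}$, using $r=2\cos\frac{(n-1)\pi}{n}\le 2\cos\frac{(2n-2)\pi}{2n+1}$ for $n\ge2$. As $y\to\infty$, $\kappa(y)\to1^+$ forces $t(y)\to t_0$ (bounded) while $S_m(y)-S_{m-1}(y)\to\infty$, so $x(y)=y+2-o(1)\to\infty$. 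The degenerate case $n=1$ is handled directly: there $S_{n-1}=S_0$ has no zeros and $R=z$, so $x(y)=y+2-1/\bigl(S_m(y)(S_m(y)-S_{m-1}(y))\bigr)$ visibly satisfies all three conditions.
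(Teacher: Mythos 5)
Your proof is correct, but it takes a genuinely different route from the paper's. The paper fixes $y\ge 2$, evaluates $R_K$ at the $2n$ points $s_j(y)$ where $t$ equals the roots $t_j=2\cos\frac{j\pi}{2n+1}$ of $(S_n-S_{n-1})(S_n+S_{n-1})$, reads off alternating signs from the explicit formulas for $S_n(t_j)$, locates $n$ roots by the intermediate value theorem, and then compares with the degree of $R_K$ in $x$ to conclude all roots are simple so that the implicit function theorem yields continuity; the desired branch is the smallest root $x_{n-1}(y)\in(s_{2n-1}(y),s_{2n-2}(y))$. You instead eliminate $x$ entirely, reduce $R_K=0$ to the scalar equation $\Phi(t)=\kappa(y)$ with $\kappa=S_m/(S_m-S_{m-1})>1$, and prove by a trigonometric derivative computation that $\Phi$ is a strictly increasing bijection from $(t_0,r)$ onto $(1,\infty)$, which gives existence, uniqueness \emph{and} continuity of $t(y)$ in one stroke without any degree count or appeal to the implicit function theorem; your interval $(t_0,r)$ sits inside the paper's $(t_{2n-1},t_{2n-2})$, so you are tracking the same branch and in fact get the slightly sharper bound $x(2)<2+2\cos\frac{(n-1)\pi}{n}$. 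The price of your approach is the monotonicity analysis: you should record explicitly that $\cos\frac{(2n-1)\theta}{2}$ does not vanish for $\theta\in\big(\frac{(n-1)\pi}{n},\frac{(2n-1)\pi}{2n+1}\big)$ (this follows since $\frac{(2k+1)\pi}{2n-1}$ cannot lie in that interval for any integer $k$), as otherwise $g$ would have a pole and the sign argument for $g'$ would only give piecewise monotonicity; with that check added, together with the routine verifications $t_0<r$ and $S_{n-1}(t_0)\neq 0$ that you flag but do not carry out, the argument is complete, including the separate degenerate case $n=1$ which the paper's sign-alternation scheme handles uniformly.
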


\begin{proof}
Let $K=C(2m+1,2n)$. We have $R_K(x,y)= S_{-n}(t) - z S_{-n-1}(t)$ where 
\begin{eqnarray*}
t &=&2- (y+2-x)(S_m(y)-S_{m-1}(y))^2,\\
z &=& 1 - (y+2-x) S_{m}(y) (S_{m}(y) - S_{m-1}(y)).
\end{eqnarray*} 
Note that $R_K(x,y)= (t - z) S_{-n-1}(t) - S_{-n-2}(t) = S_{n}(t) - (t-z) S_{n-1}(t)$. 

By Lemma \ref{chev} we have 
\begin{eqnarray*}
S_n(t)-S_{n-1}(t) &=& \prod_{j=1}^n \Big( t-2\cos \frac{(2j-1)\pi}{2n+1} \Big), \\
S_n(t) + S_{n-1}(t) &=& \prod_{j=1}^n \Big( t-2\cos \frac{2j\pi}{2n+1} \Big).
\end{eqnarray*}

Let $t_j=2\cos \frac{j\pi}{2n+1}$ for $j = 1, \cdots, 2n$. By writing $t_{2j-1}=e^{i\theta} + e^{-i\theta}$ where $\theta = \frac{(2j-1)\pi}{2n+1}$, we have 
\begin{eqnarray*}
S_n(t_{2j-1}) &=& \frac{e^{i(n+1)\theta} - e^{-i(n+1)\theta}}{e^{i\theta} - e^{-i\theta}} = \frac{\sin \frac{(2j-1)(n+1)\pi}{2n+1}} {\sin \frac{(2j-1)\pi}{2n+1}} \\
&=& \frac{\sin \Big( j\pi -\frac{\pi}{2}  + \frac{(2j-1)\pi}{2(2n+1)} \Big)} {\sin \frac{(2j-1)\pi}{2n+1}}=(-1)^{j-1}\frac{\cos \frac{(2j-1)\pi}{2(2n+1)} } {\sin \frac{(2j-1)\pi}{2n+1}}.
\end{eqnarray*}
This implies that $(-1)^{j-1}S_n(t_{2j-1})>0$. Similarly, $(-1)^{j}S_n(t_{2j})>0$.

Fix a real number $y  \ge 2$.  Let $s_j(y) = y+2 - \frac{2-t_j}{(S_m(y)-S_{m-1}(y))^2}$ for $j = 1, \cdots, 2n$. We also let $s_0 = y+2$. Since $-2 < t_{2n} < \cdots < t_1 < 2$ we have $s_{2n}(y) < \cdots < s_1(y)< y+2 = s_0(y)$. At $x=s_{2j-1}(y)$ we have $t=t_{2j-1}$ and so $S_n(t)=S_{n-1}(t)$. This implies that 
\begin{eqnarray*}
R_K(s_{2j-1}(y),y) &=& (1-(t-z))S_n(t_{2j-1}) \\
&=&-(y+2-s_{2j-1}(y))S_{m-1}(y) ( S_{m}(y) - S_{m-1}(y) )S_n(t_{2j-1}).
\end{eqnarray*}
Since $y \ge 2$, by Lemma \ref{chev} we have $S_m(y) - S_{m-1}(y)\ge S_m(2) - S_{m-1}(2) = 1$ and $S_{m-1}(y) \ge S_{m-1}(2)  = m$. Hence $(-1)^{j} R_K(s_{2j-1}(y),y)>0$. 

Similarly, for $1 \le j \le n$ we have
\begin{eqnarray*}
R_K(s_{2j}(y),y) &=& (1+t-z)S_n(t_{2j}) \\
&=& \big[  2 + (y+2-s_{2j-1}(y))S_{m-1}(y) (S_{m}(y) - S_{m-1}(y) \big] S_n(t_{2j}),
\end{eqnarray*}
which implies that $(-1)^{j} R_K(s_{2j}(y),y)>0$.

For each $1 \le j \le n-1$, since $$R_K(s_{2j+1}(y),y) R_K(s_{2j}(y),y)<0$$ there exists $x_j(y) \in (s_{2j+1}(y), s_{2j}(y))$ such that $R_K(x_j(y), y)=0.$ Since $$R_K(s_0(y),y) =  R_K(y+2,y)=1$$ and $R_K(s_1(y), y) < 0$ there exists $x_0(y) \in (s_1(y), s_0(y))$ such that $R_K(x_0(y), y)=0$. 

Since $R_K(x,y) = z S_{n-1}(t) - S_{n-2}(t)$, 
we see that $R_K(x,y)$ is a polynomial of degree $n$ in $x$ for each fixed real number $y \ge 2$. This polyomial has exactly $n$  simple real roots $x_0(y), \cdots, x_{n-1}(y)$ satisfying $x_{n-1}(y) <  \cdots <  x_0(y) < y+2$, hence the implicit function theorem implies that each $x_j(y)$ is a continuous function in $y  \ge 2$. 

By letting $x(y) = x_{n-1}(y)$ for $y \ge 2$, we have $R_K(x(y),y)=0$. Moreover, since $$x(y) > s_{2n-1}(y)= y+2 - \frac{2-2\cos \frac{(2n-1)\pi}{2n+1}}{(S_m(y)-S_{m-1}(y))^2}$$ we have $\lim_{y \to \infty} x(y) = \infty$ and $x(y) > 4- \big( 2-2\cos \frac{(2n-1)\pi}{2n+1} \big) = 4 \cos^2 \frac{(2n-1)\pi}{4n+2}$ for $y \ge 2$. 

Finally, since $x(y) < s_{2n-2}(y)$ for all $y \ge 2$ we have $x(2) < s_{2n-2}(2) = 4 \cos^2 \frac{(2n-2)\pi}{4n+2}$.
\end{proof}

\begin{lemma} \label{solution3}
Suppose $n \ge 2$. Then there exists a continuous real function $x: [2, \infty) \to (4 \cos^2 \frac{(2n-1)\pi}{4n+2}, \infty)$ in the variable $y$ such that 
\begin{itemize}
\item $x(2) < 4 \cos^2 \frac{(2n-3)\pi}{4n+2}$, 
\item $\lim_{y \to \infty} x(y) = \infty$ and 
\item $R_{C(2m+1,-2n)}(x(y),y)=0$ for all $y \in [2, \infty)$.
\end{itemize}
\end{lemma}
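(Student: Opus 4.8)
The plan is to mirror the proof of Lemma~\ref{solution2}, since $C(2m+1,-2n)$ differs from $C(2m+1,2n)$ only in the sign of the twist index (now $p=n$), so that
$R_K(x,y)=S_n(t)-z\,S_{n-1}(t)$
with $t=2-(y+2-x)(S_m(y)-S_{m-1}(y))^2$ and $z=1-(y+2-x)S_m(y)(S_m(y)-S_{m-1}(y))$. As before I would fix $y\ge 2$, set $t_j=2\cos\frac{j\pi}{2n+1}$ for $j=0,1,\dots,2n-1$, and let $s_j(y)=y+2-\frac{2-t_j}{(S_m(y)-S_{m-1}(y))^2}$ be the value of $x$ for which $t=t_j$, so that $s_{2n-1}(y)<\cdots<s_1(y)<s_0(y)=y+2$. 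The sign facts $(-1)^{j-1}S_n(t_{2j-1})>0$ established in the proof of Lemma~\ref{solution2} carry over verbatim.

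First I would evaluate $R_K$ at the odd points. At $x=s_{2j-1}(y)$ one has $t=t_{2j-1}$, hence $S_n(t)=S_{n-1}(t)$ and $R_K(s_{2j-1}(y),y)=(1-z)\,S_n(t_{2j-1})$. A one-line substitution gives $1-z=\frac{(2-t_{2j-1})S_m(y)}{S_m(y)-S_{m-1}(y)}>0$ for $y\ge 2$ (using $t_{2j-1}<2$, $S_m(y)>0$, and $S_m(y)-S_{m-1}(y)\ge 1$ from Lemma~\ref{chev}), so $R_K(s_{2j-1}(y),y)$ has the sign of $S_n(t_{2j-1})$, i.e. $(-1)^{j-1}R_K(s_{2j-1}(y),y)>0$. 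I would also record $R_K(s_0(y),y)=R_K(y+2,y)=S_n(2)-S_{n-1}(2)=1>0$.

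The one genuinely new point, and the step I expect to be the main obstacle, is that, unlike in Lemma~\ref{solution2}, the even points $s_{2j}(y)$ no longer carry a definite sign: there $R_K(s_{2j}(y),y)=(1+z)\,S_n(t_{2j})$, and the factor $1+z=2-\frac{(2-t_{2j})S_m(y)}{S_m(y)-S_{m-1}(y)}$ is positive for small $j$ but becomes negative as $t_{2j}\to-2$, so I cannot pin the last root between two of the $s_j$. Instead I would view $R_K(\cdot,y)$ as a polynomial of degree $n$ in $x$ and compute its leading coefficient, which works out to $-S_{m-1}(y)\big(S_m(y)-S_{m-1}(y)\big)^{2n-1}$; this is negative for $y\ge 2$, so $R_K(x,y)\to-\infty$ as $x\to+\infty$. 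Combined with $R_K(y+2,y)=1>0$, this forces a root in $(y+2,\infty)$. Together with the $n-1$ sign changes among $s_1(y)>s_3(y)>\cdots>s_{2n-1}(y)$, which produce one root in each interval $(s_{2j+1}(y),s_{2j-1}(y))$ for $j=1,\dots,n-1$, this exhibits $n$ distinct real roots of the degree-$n$ polynomial; hence all roots are real and simple, and the smallest one, call it $x(y)$, lies in $(s_{2n-1}(y),s_{2n-3}(y))$. This is precisely where the hypothesis $n\ge 2$ enters, so that the index $2n-3\ge 1$ is meaningful. Simplicity of the root together with the implicit function theorem then yields continuity of $x(y)$ on $[2,\infty)$.

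Finally I would read off the three conclusions from the bracketing $s_{2n-1}(y)<x(y)<s_{2n-3}(y)$. Since $(S_m(y)-S_{m-1}(y))^2\ge 1$, we get $s_{2n-1}(y)\ge y+t_{2n-1}\ge 2+t_{2n-1}=4\cos^2\frac{(2n-1)\pi}{4n+2}$, so $x(y)>4\cos^2\frac{(2n-1)\pi}{4n+2}$, placing $x(y)$ in the asserted codomain; and because $(S_m(y)-S_{m-1}(y))^2\to\infty$ as $y\to\infty$, we have $s_{2n-1}(y)\to\infty$ and hence $\lim_{y\to\infty}x(y)=\infty$. At $y=2$ one has $S_m(2)-S_{m-1}(2)=1$, so $s_{2n-3}(2)=2+t_{2n-3}=4\cos^2\frac{(2n-3)\pi}{4n+2}$, whence $x(2)<4\cos^2\frac{(2n-3)\pi}{4n+2}$, which completes the argument.
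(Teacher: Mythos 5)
Your proposal is correct and follows essentially the same route as the paper: sign changes of $R_K$ at the odd points $s_{2j-1}(y)$ give $n-1$ roots below $y+2$, the negative leading coefficient in $x$ together with $R_K(y+2,y)=1$ gives the $n$-th root in $(y+2,\infty)$, and the smallest root $x(y)\in(s_{2n-1}(y),s_{2n-3}(y))$ yields all three stated properties. Your explicit computation of the leading coefficient $-S_{m-1}(y)(S_m(y)-S_{m-1}(y))^{2n-1}$ and your remark on why the even points $s_{2j}(y)$ fail here are slightly more detailed than the paper's argument but amount to the same proof.
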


\begin{proof}
Let $K=C(2m+1,-2n)$. We have $R_{K}(x,y)= S_n(t) -  z S_{n-1}(t)$ where 
\begin{eqnarray*}
t &=& 2- (y+2-x)(S_m(y)-S_{m-1}(y))^2,\\
z &=& 1 - (y+2-x)S_m(y) ( S_m(y)-S_{m-1}(y) ).
\end{eqnarray*}

 Fix a real number $y  \ge 2$. Choose $t_j$ and $s_j(y)$ for $1 \le j \le 2n$ as in Lemma \ref{solution2}. Since 
 \begin{eqnarray*}
R_K(s_{2j-1}(y),y) &=& (1-z)S_n(t_{2j-1}) \\
&=& (y+2-s_{2j-1}(y))S_m(y) ( S_{m}(y) - S_{m-1}(y) )S_n(t_{2j-1}),
\end{eqnarray*}
we have  $(-1)^{j-1}R_K(s_{2j-1}(y),y)>0$. Hence, there exists $x_j(y) \in (s_{2j+1}(y), s_{2j-1}(y))$ such that $R_K(x_j(y), y)=0$ for each $1 \le j \le n-1$. 

By writing $R_K(x,y) = (t-z) S_{n-1}(t) - S_{n-2}(t)$ and noting that
\begin{eqnarray*}
t-z = 1+ (y+2-x)( S_{m}(y) - S_{m-1}(y)) S_{m-1}(y),
\end{eqnarray*}
we see that $R_K(x,y)$ is a polynomial of degree $n$ in $x$ with negative highest coefficient for each fixed real number $y \ge 2$. Since $\lim_{x \to \infty} R_K(x,y) = -\infty$ and $R_K(y+2,y)=1$, there exists $x_0(y) \in (y+2, \infty)$ such that $R_K(x_0(y),y)=0$. For a fixed real number $y \ge 2$, the polynomial $R_K(x,y)$ of degree $n$ in $x$ has exactly $n$  simple real roots $x_0(y), \cdots, x_{n-1}(y)$ satisfying $x_{n-1}(y) <  \cdots <  x_1(y) < y+2 < x_0(y) $, hence the implicit function theorem implies that each $x_j(y)$ is a continuous function in $y  \ge 2$. 

By letting $x(y) = x_{n-1}(y)$ for $y \ge 2$, we have $R_K(x(y),y)=0$. Moreover, since $$x(y) > s_{2n-1}(y)= y+2 - \frac{2-2\cos \frac{(2n-1)\pi}{2n+1}}{(S_m(y)-S_{m-1}(y))^2}$$ we have $\lim_{y \to \infty} x(y) = \infty$ and $x(y) > 4- \big( 2-2\cos \frac{(2n-1)\pi}{2n+1} \big) = 4 \cos^2 \frac{(2n-1)\pi}{4n+2}$ for $y \ge 2$. 

Finally, since $x(y) < s_{2n-3}(y)$ for all $y \ge 2$ we have $x(2) < s_{2n-3}(2) = 4 \cos^2 \frac{(2n-3)\pi}{4n+2}$.
\end{proof}

 \section{Proof of Theorem \ref{main}}

Suppose $K$ is a double twist knot of the form $C(2m,-2n)$,  $C(2m+1, 2n)$ or $C(2m+1, -2n)$ in the Conway notation for some positive integers $m$ and $n$. Let $X$ be the complement of an open tubular neighborhood of $K$ in $S^3$, and $X_r$ the 3-manifold obtained from $S^3$ by $r$-surgery along $K$. Recall that
$$
\text{LO}_K = \begin{cases} (-\infty,1) &\mbox{if } K = C(2m,-2n), \\ 
(-\infty, 2n-1) & \mbox{if } K = C(2m+1,2n),  \\ 
(3-2n, \infty) & \mbox{if } K = C(2m+1,-2n) \text{ and } n \ge 2. \end{cases}
$$

An element of $\mathrm{SL}_2(\BR)$ is called elliptic if its trace is a real number in $(-2,2)$. A representation $\rho: \BZ^2 \to \mathrm{SL}_2(\BR)$ is called elliptic if the image group $\rho(\BZ^2)$ contains an elliptic element of $\mathrm{SL}_2(\BR)$. In which case, since $\BZ^2$ is an abelian group every non-trivial element of $\rho(\BZ^2)$ must also be elliptic.

Using Lemmas \ref{solution}--\ref{solution3} we first prove the following.

\begin{proposition} \label{prop} 
For each rational number $r \in \mathrm{LO}_K \setminus \{0\}$ there exists a  representation $\rho: \pi_1(X_r) \to \mathrm{SL}_2(\BR)$ such that $\rho\big|_{\pi_1(\partial X)}: \pi_1(\partial X) \cong \BZ^2 \to \mathrm{SL}_2(\BR)$ is an elliptic representation. 
\end{proposition}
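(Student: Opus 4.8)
The plan is to realize each surgery representation as a representation of the knot group that kills the surgery relation. Writing $r=\mathfrak{p}/\mathfrak{q}$ in lowest terms, a homomorphism $\rho\colon\pi_1(X_r)\to\mathrm{SL}_2(\BR)$ is the same as a homomorphism $\rho\colon G(K)\to\mathrm{SL}_2(\BR)$ with $\rho(\mu^{\mathfrak{p}}\lambda^{\mathfrak{q}})=I$, where $\mu=a$ and $\lambda$ are the meridian and canonical longitude. So I would build the desired $\rho$ from the continuous families of real Riley roots supplied by Lemmas \ref{solution}--\ref{solution3}, one family for each of the three knot types, and then single out the member of the family on which the surgery word becomes trivial.

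First I would upgrade the complex Riley representations to real ones. For a real root $(x,y)$ with $x<4$ the meridian $\rho(a)$ has trace $\pm\sqrt{x}\in(-2,2)$, so it is elliptic; since $\tr\rho(a)=\pm\sqrt{x}$ and $\tr\rho(ab^{-1})=y$ are real and every character value of a two-generator representation is a polynomial in these, all traces $\tr\rho(g)$ are real, and hence the nonabelian $\rho$ is conjugate into $\mathrm{SL}_2(\BR)$ or into $\mathrm{SU}(2)$. Because $\tr\rho(ab^{-1})=y\ge 2$, the element $\rho(ab^{-1})$ is hyperbolic when $y>2$ and so cannot lie in the compact group $\mathrm{SU}(2)$; this forces $\rho$ into $\mathrm{SL}_2(\BR)$. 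In the three cases the elliptic locus $\{x<4\}$ corresponds to $x\in[4-1/(mn),4)$ in Lemma \ref{solution} and to the initial range of $y$ for which $x(y)<4$ in Lemmas \ref{solution2}--\ref{solution3}, the thresholds $4\cos^2\frac{(2n-1)\pi}{4n+2}$ and $4\cos^2\frac{(2n-3)\pi}{4n+2}$ recording exactly where $x(y)$ meets $4$.

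Next I would read off the peripheral data. Once $\rho(\mu)$ is elliptic and noncentral, any element commuting with it---in particular $\rho(\lambda)$, since $\mu$ and $\lambda$ commute in $\pi_1(\partial X)$---is again elliptic, so $\rho|_{\pi_1(\partial X)}$ is automatically an elliptic representation of $\BZ^2$; this already supplies the boundary condition. Moreover $\rho(\mu)$ and $\rho(\lambda)$ lie in a common rotation subgroup, conjugate to $\mu\mapsto R(\phi)$, $\lambda\mapsto R(\psi)$ with $M=e^{i\phi}$ and $L=e^{i\psi}$, where $L$ is given by the explicit longitude formula recorded before Lemma \ref{solution}. Choosing continuous branches $\phi(x),\psi(x)$ along the family, the identity $\rho(\mu^{\mathfrak p}\lambda^{\mathfrak q})=R(\mathfrak p\phi+\mathfrak q\psi)$ shows that $\rho$ descends to $\pi_1(X_r)$ precisely when $\mathfrak p\,\phi+\mathfrak q\,\psi\in 2\pi\BZ$.

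Finally I would run an intermediate value argument. Using Lemma \ref{chev} and the $L$-formula I would compute the limiting values of $(\phi,\psi)$ at the two ends of the elliptic range---for instance, for $C(2m,-2n)$ one finds $L=1$, so $\psi=0$, at the end $x=4-1/(mn)$ where $y=2$, while $\phi\to 0$ and $L\to -1$, so $\psi\to\pi$, at the parabolic end $x\to 4$---and show that as $x$ sweeps the interval the continuous function $\mathfrak p\,\phi(x)+\mathfrak q\,\psi(x)$ meets a multiple of $2\pi$ at an interior (hence genuinely elliptic) parameter exactly when $r$ lies in $\mathrm{LO}_K$. The excluded value $r=0$ and the half-open nature of each interval should fall out of this boundary analysis. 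I expect this last step---pinning down the endpoint angles with their correct signs and branches, and thereby matching the realizable slopes to the precise intervals $(-\infty,1)$, $(-\infty,2n-1)$ and $(3-2n,\infty)$---to be the main obstacle; by comparison, the conjugation into $\mathrm{SL}_2(\BR)$ and the ellipticity of the boundary restriction are routine.
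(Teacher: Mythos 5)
Your plan follows essentially the same route as the paper's proof: parametrize the elliptic Riley roots from Lemmas \ref{solution}--\ref{solution3}, write $M=e^{i\theta}$ and $L=e^{i\varphi}$, and run an intermediate value argument on $\mathfrak p\,\theta+\mathfrak q\,\varphi\in 2\pi\BZ$ to realize each slope, with your trace-reality/$\mathrm{SU}(2)$-exclusion argument (valid since $\tr\rho(ab^{-1})=y>2$ at interior, hence irreducible, parameters) serving as a legitimate substitute for the paper's citation of Khoi. The one step you defer --- tracking both meridian branches $\theta$ and $\pi-\theta$ and pinning down the endpoint values of $\varphi$, including resolving its $2\pi\BZ$ ambiguity via the bounds on $\theta(2)$ supplied by Lemmas \ref{solution2}--\ref{solution3} --- is exactly where the paper's case-by-case work lies and is what produces the three specific intervals, but you have located it correctly and your endpoint computation for $C(2m,-2n)$ on the first branch agrees with the paper's.
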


\begin{proof}
We first consider the case $K=C(2m,-2n)$. Let $\theta_0 = \arccos \sqrt{1-1/(4mn)}$. For $\theta \in (0, \theta_0) \cup (\pi - \theta_0, \pi)$ we let $x=4\cos^2\theta$. Then $x \in (4-1/(mn), 4)$. Consider the continuous real function $$y:  [4-1/(mn), 4] \to [2, \infty)$$ in Lemma \ref{solution}. Let $M=e^{i\theta}$. Then $x = 4 \cos^2 \theta = (M+M^{-1})^2$. Since $R_K(x,y(x))=0$ there exists a non-abelian representation $\rho: \pi_1(X) \to \mathrm{SL}_2(\BC)$ such that $$\rho(a) = \left[ \begin{array}{cc}
M& 1 \\
0 & M^{-1} \end{array} \right] \quad \text{and} \quad 
\rho(b) = \left[ \begin{array}{cc}
M & 0 \\
2 - y(x) & M^{-1} \end{array} \right].$$ 
Note that $x$ is the square of the trace of a meridian.
Moreover, the image of the canonical longitude $\lambda$ corresponding to the meridian $\mu=a$ has the form $\rho(\lambda) = \left[ \begin{array}{cc}
L & * \\
0 & L^{-1} \end{array} \right]$, where $$
L
= 
- \frac{M^{-1}\alpha - M\beta}{M\alpha - M^{-1} \beta}
$$ and $\alpha = S_m(y(x)) - S_{m-1}(y(x))$, $\beta = S_{m-1}(y(x)) - S_{m-2}(y(x))$. Note that $\alpha > \beta >0$, since $y(x) > 2$.

It is easy to see that $|L| = \sqrt{L \bar{L} }=1$, where $\bar{L}$ denotes the complex conjugate of $L$. Moreover, by a direct calculation, we have
\begin{eqnarray*}
\text{Re}(L) &=&  \big( 2\alpha \beta -(\alpha^2+\beta^2) \cos 2\theta \big )/|M\alpha - M^{-1}\beta|^2,\\
\text{Im}(L) &=& (\alpha^2 - \beta^2)\sin2\theta/|M\alpha - M^{-1}\beta|^2.
\end{eqnarray*}
Note that $\text{Im}(L) >0$ if $\theta \in (0, \theta_0)$ and $\text{Im}(L) <0$ if $\theta \in (\pi - \theta_0, \pi)$. Let 
$$\varphi(\theta) = \begin{cases} \arccos \big[ \big( 2\alpha \beta -(\alpha^2+\beta^2) \cos 2\theta \big)/| e^{i\theta} \alpha - e^{-i\theta} \beta|^2 \big] &\mbox{if } \theta \in (0, \theta_0), \\ 
-\arccos \big[ \big( 2\alpha \beta -(\alpha^2+\beta^2) \cos 2\theta \big)/| e^{i\theta} \alpha - e^{-i\theta} \beta|^2 \big] & \mbox{if } \theta \in (\pi - \theta_0, \pi). \end{cases}$$
Then $L = e^{i\varphi(\theta)}$. Note that $\varphi(\theta) \in (0, \pi)$ if $\theta \in (0, \theta_0)$ and $\varphi(\theta) \in (-\pi, 0)$ if $\theta \in (\pi - \theta_0, \pi)$. 

The function $f(\theta) := - \frac{\varphi(\theta)}{\theta}$ is a continuous function on each of the intervals $(0, \theta_0)$ and $(\pi - \theta_0, \pi)$. As $\theta \to 0^+$ we have $M \to 1$ and $L = - \frac{M^{-1}\alpha - M\beta}{M\alpha - M^{-1} \beta} \to -1$, so $\varphi(\theta) \to \pi$. As $\theta \to \theta_0^-$ we have $x \to 4-1/(mn)$, $y(x) \to 2$ and $\alpha, \beta \to 1$, so $L = - \frac{M^{-1}\alpha - M\beta}{M\alpha - M^{-1} \beta} \to 1$ and $\varphi(\theta) \to 0$. This implies that
$$
\lim_{\theta \to 0^+} - \frac{\varphi(\theta)}{\theta} = -\infty \qquad \text{and} \qquad \lim_{\theta \to \theta_0^-} - \frac{\varphi(\theta)}{\theta} = 0.
$$
Hence the image of $f(\theta)$ on the interval $(0, \theta_0)$ contains the interval $(-\infty,0)$. 

Similarly, since
$$
\lim_{\theta \to (\pi - \theta_0)^+} - \frac{\varphi(\theta)}{\theta}  = 0 \qquad \text{and} \qquad \lim_{\theta \to \pi^-} - \frac{\varphi(\theta)}{\theta} = 1,
$$
 the image of  $f(\theta)$ on the interval $(\pi - \theta_0, \pi)$ contains the interval $(0,1)$. 

Suppose $r=\frac{p}{q}$ is a  rational number such that $r \in (-\infty,0) \cup (0,1)$. Then $r= f(\theta) = - \frac{\varphi(\theta)}{\theta}$ for some $\theta \in (0, \theta_0) \cup (\pi - \theta_0, \pi)$. Since $M^p L^q = e^{i(p\theta + q \varphi(\theta))} = 1$, we have $\rho(\mu^p \lambda^q) = I$. This means that the non-abelian representation $\rho: \pi_1(X) \to \mathrm{SL}_2(\BC)$ extends to a  representation $\rho: \pi_1(X_r) \to \mathrm{SL}_2(\BC)$. Finally, since $2-y(x)<0$, a result in \cite[page 786]{Kh} implies that $\rho$ can be conjugated to an $\mathrm{SL}_2(\BR)$-representation. Note that the restriction of this representation to the peripheral subgroup $\pi_1(\partial X)$ of the knot group is an elliptic representation. This completes the proof of Proposition \ref{prop} for $K=C(2m,-2n)$. 

We now consider the case $K=C(2m+1,2n)$. Consider the continuous real function $$x: [2, \infty) \to \Big( 4 \cos^2 \frac{(2n-1)\pi}{4n+2}, \infty \Big)$$ in Lemma \ref{solution2}. Since $x(2) < 4 \cos^2 \frac{(2n-2)\pi}{4n+2}$ and $\lim_{y \to \infty} x(y) = \infty$, there exists $y^*>2$ such that $x(y^*)=4$ and $4 \cos^2 \frac{(2n-1)\pi}{4n+2} < x (y)< 4$ for all $y \in [2,y^*)$. 

For each $y \in [2,y^*)$ we let $\theta(y) = \arccos(\sqrt{x(y)}/2)$. Then $\theta(2) > \frac{(2n-2)\pi}{4n+2}$, and for $y \in [2,y^*)$ we have $0 < \theta(y) < \frac{(2n-1)\pi}{4n+2}$ and $x(y)=4\cos^2\theta(y)$. Since $R_K(x(y),y)=0$  there exists a non-abelian representation $\rho: \pi_1(X) \to \mathrm{SL}_2(\BC)$ such that $$\rho(a) = \left[ \begin{array}{cc}
M& 1 \\
0 & M^{-1} \end{array} \right] \quad \text{and} \quad 
\rho(b) = \left[ \begin{array}{cc}
M & 0 \\
2 - y & M^{-1} \end{array} \right],$$
where $M=e^{i\theta(y)}$. 
Moreover, the image of the canonical longitude $\lambda$ corresponding to the meridian $\mu=a$ has the form $\rho(\lambda) = \left[ \begin{array}{cc}
L & * \\
0 & L^{-1} \end{array} \right]$, where $$
L
= 
- M^{-4n} \frac{M^{-1}\gamma - M\delta}{M\gamma - M^{-1} \delta}
$$ and $\gamma = S_m(y)$, $\delta = S_{m-1}(y)$. Note that $\gamma > \delta >0$, since $y > 2$. 

As in the previous case,  we write $L=e^{i\varphi(y)}$ where $$\varphi(y) = (2n-2) \pi-4n \theta(y) + \arccos \big[ \big( 2\gamma \delta -(\gamma^2+\delta^2) \cos 2\theta(y) \big)/| e^{i\theta(y)} \gamma - e^{-i\theta(y)} \delta |^2 \big].$$ 
Since $\frac{(2n-2)\pi}{4n+2} < \theta(2) < \frac{(2n-1)\pi}{4n+2}$ we have $-\frac{2\pi}{2n+1}<\varphi(2) <2\pi -  \frac{3\pi}{2n+1}$. 

As $y \to 2^+$, $\rho$ approaches a reducible representation and so $L \to 1, \, \varphi(y) \to \varphi(2) = k2\pi$ for some integer $k$. Since $-\frac{2\pi}{2n+1}<\varphi(2) <2\pi -  \frac{3\pi}{2n+1}$, we must have $\varphi(2) =0$. As $y \to (y^*)^-$, we have $x(y) \to 4$, $M \to 1, \, L = 
- M^{-4n} \frac{M^{-1}\gamma - M\delta}{M\gamma - M^{-1} \delta}\to -1$ and hence $\theta(y) \to 0^+, \, \varphi(y) \to (2l-1)\pi$ for some integer $l$. Since 
\begin{eqnarray*} 
 (2l-1)\pi 
&=& \lim_{y \to (y^*)^-} (2n-2)\pi -4n \theta(y) \\
&& \qquad  + \, \arccos \big[ \big( 2\gamma \delta -(\gamma^2+\delta^2) \cos 2\theta(y) \big)/| e^{i\theta(y)} \gamma - e^{-i\theta(y)} \delta |^2 \big] \\
&=& \lim_{y \to (y^*)^-} (2n-2)\pi \\
&& \qquad  + \, \arccos \big[ \big( 2\gamma \delta -(\gamma^2+\delta^2) \cos 2\theta(y) \big)/| e^{i\theta(y)} \gamma - e^{-i\theta(y)} \delta |^2 \big],
\end{eqnarray*}
we have $(2n-2)\pi \le (2l-1) \pi \le (2n-1)\pi$.  This implies that $2l-1=2n-1$ and $\varphi(y) \to (2n-1)\pi$ as $y \to (y^*)^-$. Hence the image of $g(y) := - \frac{\varphi(y)}{\theta(y)}$ on the interval $(2, y^*)$ contains the interval $(-\infty,0)$. 

Similarly, with $\theta_1(y) = \pi - \theta(y)$ we have $x(y) = 4\cos^2(\theta_1(y))$ and hence for each $y \in [2,y^*)$ there exists a non-abelian representation $\rho_1: \pi_1(X) \to \mathrm{SL}_2(\BC)$ such that $$\rho_1(a) = \left[ \begin{array}{cc}
M& 1 \\
0 & M^{-1} \end{array} \right] \quad \text{and} \quad 
\rho_1(b) = \left[ \begin{array}{cc}
M & 0 \\
2 - y & M^{-1} \end{array} \right],$$
where $M=e^{i \theta_1(y)}$. 
Moreover, the image of the canonical longitude $\lambda$ corresponding to the meridian $\mu=a$ has the form $\rho_1(\lambda) = \left[ \begin{array}{cc}
L & * \\
0 & L^{-1} \end{array} \right]$, where $L=e^{i\varphi_1(y)}$ and 
\begin{eqnarray*}
 \varphi_1(y) 
&=& - (2n-2)\pi + 4n\pi- 4n \theta_1(y) \\
&& - \, \arccos \big[ \big( 2\gamma \delta -(\gamma^2+\delta^2) \cos 2\theta_1(y) \big)/| e^{i\theta_1(y)} \gamma - e^{-i\theta_1(y)} \delta |^2 \big] \\
 &=& - (2n-2)\pi + 4n \theta(y) \\
 && - \,  \arccos \big[ \big( 2\gamma \delta -(\gamma^2+\delta^2) \cos 2\theta_1(y)) \big/| e^{i\theta_1(y)} \gamma - e^{-i\theta_1(y)} \delta |^2 \big].
\end{eqnarray*}
Since $\frac{(2n-2)\pi}{4n+2} < \theta(2) < \frac{(2n-1)\pi}{4n+2}$ we have $-2\pi +  \frac{3\pi}{2n+1}<\varphi_1(2) < \frac{2\pi}{2n+1}$. 

As $y \to 2^+$,  $\rho_1$ approaches a reducible representation and so $L \to 1, \, \varphi_1(y) \to 0$. As $y \to (y^*)^-$, we have $x(y) \to 4$, $M \to -1, \, L = 
- M^{-4n} \frac{M^{-1}\gamma - M\delta}{M\gamma - M^{-1} \delta}\to -1$ and hence $\theta_1(y) \to \pi, \, \varphi_1(y) \to -(2n-1)\pi$. This implies that the image of $g_1(y) := - \frac{\varphi_1(y)}{\theta_1(y)}$ on the interval $(2, y^*)$ contains the interval $(0,2n-1)$. 

The rest of the proof of Proposition \ref{prop} for $C(2m+1,2n)$ is similar to that for $C(2m,-2n)$.

Lastly, we consider the case $K=C(2m+1,-2n)$ and $n \ge 2$. Consider the continuous real function $$x: [2, \infty) \to \Big( 4 \cos^2 \frac{(2n-1)\pi}{4n+2}, \infty \Big)$$ in Lemma \ref{solution3}. Since $x(2) < 4 \cos^2 \frac{(2n-3)\pi}{4n+2}$ and $\lim_{y \to \infty} x(y) = \infty$, there exists $y^*>2$ such that $x(y^*)=4$ and $4 \cos^2 \frac{(2n-1)\pi}{4n+2} < x (y)< 4$ for all $y \in [2,y^*)$. 

For each $y \in [2,y^*)$ we let $\theta(y) = \arccos(\sqrt{x(y)}/2)$. Then $\theta(2) > \frac{(2n-3)\pi}{4n+2}$, and for $y \in [2,y^*)$ we have $0 < \theta(y) < \frac{(2n-1)\pi}{4n+2}$ and $x(y)=4\cos^2\theta(y)$. Since $R_K(x(y),y)=0$  there exists a non-abelian representation $\rho: \pi_1(X) \to \mathrm{SL}_2(\BC)$ such that $$\rho(a) = \left[ \begin{array}{cc}
M& 1 \\
0 & M^{-1} \end{array} \right] \quad \text{and} \quad 
\rho(b) = \left[ \begin{array}{cc}
M & 0 \\
2 - y & M^{-1} \end{array} \right],$$
where $M=e^{i\theta(y)}$. 
Moreover, the image of the canonical longitude $\lambda$ corresponding to the meridian $\mu=a$ has the form $\rho(\lambda) = \left[ \begin{array}{cc}
L & * \\
0 & L^{-1} \end{array} \right]$, where $$
L
= 
- M^{4n} \frac{M^{-1}\gamma - M\delta}{M\gamma - M^{-1} \delta}
$$ and $\gamma = S_m(y)$, $\delta = S_{m-1}(y)$. Note that $\gamma > \delta >0$, since $y > 2$. 

As above,  we write $L=e^{i\varphi(y)}$ where $$\varphi(y) = -(2n-2)\pi + 4n \theta(y) + \arccos \big[ \big( 2\gamma \delta -(\gamma^2+\delta^2) \cos 2\theta(y) \big)/| e^{i\theta(y)} \gamma - e^{-i\theta(y)} \delta |^2 \big].$$ 
Since $\frac{(2n-3)\pi}{4n+2} < \theta(2) < \frac{(2n-1)\pi}{4n+2}$ we have $-2\pi + \frac{4\pi}{2n+1}<\varphi(2) < 2\pi - \frac{(2n-1)\pi}{2n+1}$. 

As $y \to 2^+$, $\rho$ approaches a reducible representation and so $L \to 1, \, \varphi(y) \to \varphi(2) = k2\pi$ for some integer $k$. Since $-2\pi + \frac{4\pi}{2n+1}<\varphi(2) < 2\pi - \frac{(2n-1)\pi}{2n+1}$, we must have $\varphi(2) =0$. 

As $y \to (y^*)^-$, we have $x(y) \to 4$, $M \to 1, \, L= 
- M^{4n} \frac{M^{-1}\gamma - M\delta}{M\gamma - M^{-1} \delta} \to -1$ and hence $\theta(y) \to 0^+, \, \varphi(y) \to (2l-1)\pi$ for some integer $l$. Since 
\begin{eqnarray*}
 (2l-1)\pi &=& \lim_{y \to (y^*)^-} -(2n-2)\pi +4n \theta(y) \\
 && \qquad  + \, \arccos \big[ \big( 2\gamma \delta -(\gamma^2+\delta^2) \cos 2\theta(y) \big)/| e^{i\theta(y)} \gamma - e^{-i\theta(y)} \delta |^2 \big] \\
&=& \lim_{y \to (y^*)^-} -(2n-2)\pi  \\
&& \qquad  + \, \arccos \big[ \big( 2\gamma \delta -(\gamma^2+\delta^2) \cos 2\theta(y) \big)/| e^{i\theta(y)} \gamma - e^{-i\theta(y)} \delta |^2 \big],
\end{eqnarray*}
we have $-(2n-2)\pi \le (2l-1) \pi \le -(2n-3)\pi$.  This implies that $2l-1=-(2n-3)$ and $\varphi(y) \to -(2n-3)\pi$ as $y \to (y^*)^-$. Hence the image of $h(y) := - \frac{\varphi(y)}{\theta(y)}$ on the interval $(2, y^*)$ contains the interval $(0, \infty)$. 

Similarly, with $\theta_1(y) = \pi - \theta(y)$ we have $x(y) = 4\cos^2(\theta_1(y))$ and hence for each $y \in [2,y^*)$ there exists a non-abelian representation $\rho_1: \pi_1(X) \to \mathrm{SL}_2(\BC)$ such that $$\rho_1(a) = \left[ \begin{array}{cc}
M& 1 \\
0 & M^{-1} \end{array} \right] \quad \text{and} \quad 
\rho_1(b) = \left[ \begin{array}{cc}
M & 0 \\
2 - y & M^{-1} \end{array} \right],$$
where $M=e^{i \theta_1(y)}$. 
Moreover, the image of the canonical longitude $\lambda$ corresponding to the meridian $\mu=a$ has the form $\rho_1(\lambda) = \left[ \begin{array}{cc}
L & * \\
0 & L^{-1} \end{array} \right]$, where $L=e^{i\varphi_1(y)}$ and 
\begin{eqnarray*}
 \varphi_1(y) &=& (2n-2)\pi - 4n\pi +  4n \theta_1(y) \\
 && - \, \arccos \big[ \big( 2\gamma \delta -(\gamma^2+\delta^2) \cos 2\theta_1(y) \big)/| e^{i\theta_1(y)} \gamma - e^{-i\theta_1(y)} \delta |^2 \big] \\
 &=& (2n-2)\pi - 4n \theta(y) \\
 && - \,  \arccos \big[ \big( 2\gamma \delta -(\gamma^2+\delta^2) \cos 2\theta_1(y) \big)/| e^{i\theta_1(y)} \gamma - e^{-i\theta_1(y)} \delta |^2 \big].
\end{eqnarray*}
Since $\frac{(2n-3)\pi}{4n+2} < \theta(2) < \frac{(2n-1)\pi}{4n+2}$ we have 
$-2\pi + \frac{(2n-1)\pi}{2n+1} <\varphi_1(2) < 2\pi - \frac{4\pi}{2n+1}$. 

As $y \to 2^+$,  $\rho_1$ approaches a reducible representation and so $L \to 1, \, \varphi_1(y) \to \varphi_1(2) =0$. As $y \to (y^*)^-$, we have $x(y) \to 4$, $M \to -1, \, L = 
- M^{4n} \frac{M^{-1}\gamma - M\delta}{M\gamma - M^{-1} \delta} \to -1$ and hence $\theta_1(y) \to \pi, \, \varphi_1(y) \to (2n-3)\pi$. This implies that the image of $h_1(y) := - \frac{\varphi_1(y)}{\theta_1(y)}$ on the interval $(2, y^*)$ contains the interval $( -(2n-3),0)$. 

The rest of the proof of Proposition \ref{prop} for $C(2m+1,-2n)$ is similar to that for $C(2m,-2n)$.
\end{proof}

We now finish the proof of Theorem \ref{main}. Suppose $r$ is a rational number such that $r \in \text{LO}_K$. If $r \not= 0$, by Proposition \ref{prop}, there exists a  representation $\rho: \pi_1(X_r) \to \mathrm{SL}_2(\BR)$ such that $\rho\big|_{\pi_1(\partial X)}$ is an elliptic representation. This representation lifts to a representation $\tilde{\rho}: \pi_1(X_r) \to \widetilde{\mathrm{SL}_2(\BR)}$, where $\widetilde{\mathrm{SL}_2(\BR)}$ is the universal covering group of $\mathrm{SL}_2(\BR)$. See e.g. \cite[Sec. 3.5]{CD} and \cite[Sec. 2.2]{Va}. Note that $X_r$ is an irreducible 3-manifold (by \cite{HTh}) and $\widetilde{\mathrm{SL}_2(\BR)}$ is a left orderable group (by \cite{Be}). Hence, by \cite{BRW}, $\pi_1(X_r)$ is a left orderable group. 
Finally, $0$-surgery along a knot always produces a prime manifold whose first Betti number is $1$, and by \cite{BRW} such manifold has left orderable fundamental group.

\section*{Acknowledgements} 
The author has been partially supported by a grant from the Simons Foundation (\#354595). He would like to thank the referees for helpful comments and suggestions.

\end{document}